\numberwithin{equation}{section}
\theoremstyle{plain}
\newtheorem{theorem}{Theorem}
\newtheorem{proposition}{Proposition}[section]
\newtheorem{lemma}[proposition]{Lemma}
\theoremstyle{definition}
\newtheorem{definition}{Definition}[section]
\newcommand{\R}{\mathbb{R}}
\newcommand{\cH}{\mathcal{H}}
\newcommand{\cM}{\mathcal{M}}
\newcommand\cl[1]{\overline{#1}}
\newcommand{\loc}{\mathrm{loc}}
\newcommand{\weakto}{\rightharpoonup}
\newcommand{\defeq}{\vcentcolon=}
\renewcommand{\d}{\mathop{}\!\mathrm{d}}
\DeclarePairedDelimiter{\abs}{\lvert}{\rvert}
\DeclarePairedDelimiter{\norm}{\lVert}{\rVert}
\DeclarePairedDelimiter{\paren}{\lparen}{\rparen}
\DeclarePairedDelimiter{\set}{\lbrace}{\rbrace}
\DeclareMathOperator{\supp}{supp}
\begin{document}

\title[Representation formula for the normal derivative]{A representation formula for the distributional normal derivative}

\author{Augusto C. Ponce}
\address{
Augusto C. Ponce\hfill\break\indent
Universit{\'e} catholique de Louvain\hfill\break\indent
Institut de Recherche en Math{\'e}matique et Physique\hfill\break\indent
Chemin du Cyclotron 2, bte L7.01.02\hfill\break\indent
1348 Louvain-la-Neuve\hfill\break\indent
Belgium}
\email{Augusto.Ponce@uclouvain.be}

\author{Nicolas Wilmet}
\address{
Nicolas Wilmet\hfill\break\indent
Universit{\'e} catholique de Louvain\hfill\break\indent
Institut de Recherche en Math{\'e}matique et Physique\hfill\break\indent
Chemin du Cyclotron 2, bte L7.01.02\hfill\break\indent
1348 Louvain-la-Neuve\hfill\break\indent
Belgium}
\email{Nicolas.Wilmet@uclouvain.be}

\subjclass[2010]{35J10, 31B10, 35B50}
\keywords{Schr\"{o}dinger operator, distributional normal derivative, Hopf lemma, measure data}

\begin{abstract}
We prove an integral representation formula for the distributional normal derivative of solutions of
\[
\begin{dcases}
\begin{aligned}
- \Delta u + V u &= \mu && \text{in \(\Omega\),} \\
u &= 0 && \text{on \(\partial\Omega\),}
\end{aligned}
\end{dcases}
\]
where \(V \in L_\loc^1(\Omega)\) is a nonnegative function and \(\mu\) is a finite Borel measure on \(\Omega\). As an application, we show that the Hopf lemma holds almost everywhere on \(\partial\Omega\) when \(V\) is a nonnegative Hopf potential.
\end{abstract}

\maketitle

\section{Introduction and main results}

In \cite{BrezisPonce:2008}, H.~Brezis and the first author introduced a notion of distributional normal derivative that applies in particular to solutions of the Dirichlet problem
\begin{equation}
\label{eq:dp}
\begin{dcases}
\begin{aligned}
- \Delta u + V u &= \mu && \text{in \(\Omega\),} \\
u &= 0 && \text{on \(\partial\Omega\),}
\end{aligned}
\end{dcases}
\end{equation}
where \(\Omega \subset \R^N\) is a smooth connected bounded open set, \(V \in L_\loc^1(\Omega)\) is a nonnegative function and \(\mu\) is a finite Borel measure on \(\Omega\). By a solution of \eqref{eq:dp}, we mean a function \(u \in W_0^{1, 1}(\Omega) \cap L^1(\Omega; V \d{x})\) such that
\[
- \Delta u + V u = \mu \quad \text{in the sense of distributions in \(\Omega\).}
\]
For example, when \(\mu = f \d{x}\) with \(f \in L^2(\Omega)\) the solution always exists and can be obtained as the minimizer of the functional
\[
E(z) = \frac{1}{2} \int_\Omega {} (\abs{\nabla z}^2 + V z^2) \d{x} - \int_\Omega f z \d{x}
\]
in \(W_0^{1, 2}(\Omega) \cap L^2(\Omega; V \d{x})\)\,; see \cite{DalMasoMosco:1986}*{Theorem~2.4}. The assumption \(V \in L_\loc^1(\Omega)\) ensures that \(C_c^\infty(\Omega) \subset L^2(\Omega; V \d{x})\) and thus smooth functions with compact support are admissible in the Euler--Lagrange equation associated to \(E\).

The distributional normal derivative of \(u\) is an element of \(L^1(\partial\Omega)\), which we denote by \(\partial u / \partial n\) and coincides with the classical normal derivative when \(u \in C^2(\cl\Omega)\), that is characterized by the identity
\begin{equation}
\label{eq:integral_identity_distributional_normal_derivative}
\int_\Omega \nabla u \cdot \nabla \phi \d{x} = - \int_\Omega \phi \Delta u - \int_{\partial\Omega} \frac{\partial u}{\partial n} \phi \d\sigma \quad \text{for every \(\phi \in C^\infty(\cl\Omega)\),}
\end{equation}
where \(\sigma = \cH^{N - 1}\lfloor_{\partial\Omega}\) denotes the surface measure on \(\partial\Omega\)\,; see \cite{BrezisPonce:2008}*{Theorem~1.2} or \cite{Ponce:2016}*{Proposition~7.3}. We adopt the convention that \(n\) is the \emph{inward} unit normal vector on \(\partial\Omega\), which explains the minus sign in front of the second integral in the right-hand side of \eqref{eq:integral_identity_distributional_normal_derivative}.

Our goal in this paper is to prove an integral representation formula for \(\partial u / \partial n\) that involves \(\mu\) and is valid for all solutions of \eqref{eq:dp}. To obtain the kernel of such a formula, we rely upon a counterpart of the notion of duality solution by A.~Malusa and L.~Orsina~\cite{MalusaOrsina:1996}, inspired from the seminal paper \cite{LittmanStampacchiaWeinberger:1963}. We adapt this formalism to the boundary value problem
\begin{equation}
\label{eq:bvp_dirac}
\begin{dcases}
\begin{aligned}
- \Delta v + V v &= 0 && \text{in \(\Omega\),} \\
v &= \delta_a && \text{on \(\partial\Omega\),}
\end{aligned}
\end{dcases}
\end{equation}
involving the Dirac measure \(\delta_a\) on \(\partial\Omega\).

\begin{definition}
Given \(a \in \partial\Omega\), a function \(P_a \in L^1(\Omega)\) is a duality solution of \eqref{eq:bvp_dirac} whenever
\[
\frac{\widehat{\partial \zeta_f}}{\partial n}(a) = \int_\Omega P_a f \d{x} \quad \text{for every \(f \in L^\infty(\Omega)\),}
\]
where \(\zeta_f\) denotes the solution of \eqref{eq:dp} with datum \(\mu = f \d{x}\).
\end{definition}

In the identity above, \(\widehat{\partial \zeta_f} / \partial n\) is a pointwise representative of \(\partial \zeta_f / \partial n\) that we introduce in \cite{PonceWilmet:2020} as follows. One first considers the solution of
\begin{equation}
\label{eq:approx_dp}
\begin{dcases}
\begin{aligned}
- \Delta \zeta_{f, k} + \min \set{V, k} \zeta_{f, k} &= f && \text{in \(\Omega\),} \\
\zeta_{f, k} &= 0 && \text{on \(\partial\Omega\),}
\end{aligned}
\end{dcases}
\end{equation}
which belongs to \(C^1(\cl\Omega)\) and converges to \(\zeta_f\) in \(L^1(\Omega)\) as \(k \to \infty\)\,; see \cite{OrsinaPonce:2020}*{Lemma~3.2}. We prove in \cite{PonceWilmet:2020} that \((\partial \zeta_{f, k} / \partial n)\) converges pointwise on \(\partial\Omega\) and the function
\[
\frac{\widehat{\partial \zeta_f}}{\partial n}(a) \defeq \lim_{k \to \infty} \frac{\partial \zeta_{f, k}}{\partial n}(a) \quad \text{for \(a \in \partial\Omega\)}
\]
is the distributional normal derivative of \(\zeta_f\)\,; see \cite{PonceWilmet:2020}*{Proposition~2.1}.

Existence and uniqueness of the duality solution \(P_a\) for every \(a \in \partial\Omega\) is a straightforward matter that is easily handled using the Riesz representation theorem; see~\cite{PonceWilmet:2020}*{Proposition~3.1}. The duality solution \(P_a\) can also be obtained from an approximation scheme based on a truncation of \(V\) (analogous to \eqref{eq:approx_dp}); see Lemma~\ref{lem:measure_limit} below. Then, by Fatou's lemma,
\begin{equation}
\label{eq:Pa_subsolution}
- \Delta P_a + V P_a \le 0 \quad \text{in the sense of distributions in \(\Omega\).}
\end{equation}
As \(P_a\) is nonnegative, \(P_a\) is subharmonic, whence by local boundedness of \(P_{a}\) its precise representative \(\widehat{P_a}\) exists everywhere on \(\Omega\)\,; see, e.g., \cite{Ponce:2016}*{Lemma~8.10}. The latter means that
\[
\lim_{r \to 0} \frac{1}{r^N} \int_{B_r(x)} {} \abs[\big]{P_a - \widehat{P_a}(x)} \d{y} = 0
\quad
\text{for every \(x \in \Omega\).}
\]
Using this family of functions \(\widehat{P_a}\), we prove a representation formula for the distributional normal derivative \(\partial u / \partial n\) that is valid almost everywhere on \(\partial\Omega\)\,:

\begin{theorem}
\label{thm:representation_formula}
If \(u\) satisfies \eqref{eq:dp} for some finite Borel measure \(\mu\) on \(\Omega\), then, for almost every \(a \in \partial\Omega\), we have \(\widehat{P_a} \in L^1(\Omega; \abs{\mu})\) and
\[
\frac{\partial u}{\partial n}(a) = \int_\Omega \widehat{P_a} \d\mu.
\]
\end{theorem}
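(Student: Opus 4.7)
The plan is to prove the representation formula in a regularized setting where $V$ is replaced by the bounded truncation $V_k \defeq \min\{V, k\}$, and then pass to the limit, relying on Lemma~\ref{lem:measure_limit} to control $P_a^k$. The integrability claim comes first: by nonnegativity of $P_a$ and \eqref{eq:Pa_subsolution}, a comparison with the case $V = 0$ gives the pointwise bound $\widehat{P_a}(x) \le Q(a, x)$ on $\Omega$, where $Q(a, \cdot)$ denotes the classical Poisson kernel of $\Omega$. Since $\int_{\partial\Omega} Q(a, x) \d\sigma(a) = 1$ for every $x \in \Omega$, Tonelli gives
\[
\int_{\partial\Omega} \int_\Omega \widehat{P_a} \d\abs{\mu} \d\sigma(a) \le \abs{\mu}(\Omega) < \infty,
\]
so that $\widehat{P_a} \in L^1(\Omega; \abs{\mu})$ for $\sigma$-almost every $a \in \partial\Omega$; the same comparison will provide uniform-in-$k$ bounds on the auxiliary functions used below.

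Next, I would fix $\phi \in C(\partial\Omega)$ and, for each $k \ge 1$, let $u_k$ be the solution of \eqref{eq:dp} with $V_k$ in place of $V$, and $P_a^k$ the corresponding duality solution. Because $V_k$ is bounded, both are classical objects, and the function
\[
\Phi_k(x) \defeq \int_{\partial\Omega} \phi(a) P_a^k(x) \d\sigma(a)
\]
is a classical solution of $-\Delta \Phi_k + V_k \Phi_k = 0$ in $\Omega$ with $\Phi_k = \phi$ on $\partial\Omega$; in particular $\Phi_k \in C(\cl\Omega)$. For $\mu = g \d x$ with $g \in L^\infty(\Omega)$, Green's second identity applied to $u_k$ and $\Phi_k$ yields at once
\[
\int_\Omega \Phi_k \d\mu = \int_{\partial\Omega} \phi \frac{\partial u_k}{\partial n} \d\sigma,
\]
and a standard mollification of $\mu$ together with the continuity of $\Phi_k$ on $\cl\Omega$ extends this identity to every finite Borel measure $\mu$.

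It remains to pass to the limit as $k \to \infty$. By Lemma~\ref{lem:measure_limit}, $P_a^k \downarrow \widehat{P_a}$ pointwise on $\Omega$ for every $a \in \partial\Omega$, hence $\Phi_k(x) \to \Phi(x) \defeq \int_{\partial\Omega} \phi(a) \widehat{P_a}(x) \d\sigma(a)$ at every $x \in \Omega$. The comparison with $Q$ yields the uniform bound $\abs{\Phi_k(x)} \le \norm{\phi}_{L^\infty(\partial\Omega)}$ on $\Omega$, so dominated convergence gives $\int_\Omega \Phi_k \d\mu \to \int_\Omega \Phi \d\mu$. Coupling this with the convergence $\partial u_k/\partial n \to \partial u/\partial n$ in $L^1(\partial\Omega)$, which is expected from the truncation scheme defining $\widehat{\partial \zeta_f/\partial n}$ in~\cite{PonceWilmet:2020} once extended to measure data, produces
\[
\int_{\partial\Omega} \phi(a) \frac{\partial u}{\partial n}(a) \d\sigma(a) = \int_{\partial\Omega} \phi(a) \paren*{\int_\Omega \widehat{P_a} \d\mu} \d\sigma(a)
\]
for every $\phi \in C(\partial\Omega)$, whence the pointwise almost-everywhere identity on $\partial\Omega$.

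The most delicate point I anticipate is the $L^1(\partial\Omega)$ convergence of $\partial u_k/\partial n$ to $\partial u/\partial n$ when $\mu$ has a non-trivial singular part, since the construction of $\widehat{\partial \zeta_f/\partial n}$ in~\cite{PonceWilmet:2020} is tailored to $L^\infty$ data. A robust way to circumvent this is to reverse the logic of the last step: first establish the truncated identity unconditionally, pass $k \to \infty$ on the right-hand side via the dominated convergence argument above, and then interpret the resulting limit of $\int_{\partial\Omega} \phi \, \partial u_k/\partial n \d\sigma$ as a Riesz datum that must coincide with $\int_{\partial\Omega} \phi \, \partial u/\partial n \d\sigma$, using uniqueness of the distributional normal derivative associated with the strong $L^1$ limit $u_k \to u$.
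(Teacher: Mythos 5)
Your route is genuinely different from the paper's: you test the formula against \(\phi \in C^0(\partial\Omega)\) and identify \(\Phi_k = \int_{\partial\Omega} \phi(a) P_{a, k} \d\sigma(a)\) as the solution of the adjoint boundary value problem with datum \(\phi\), whereas the paper proves pointwise almost everywhere convergence of \(\partial w_k / \partial n(a)\) itself, using a double truncation (of \(V\) \emph{and} of \(\mu\), the latter restricted to \(\omega_k \Subset \Omega\)) together with the compact-support case (\Cref{lem:measure_compact}). Your integrability argument via the comparison \(0 \le \widehat{P_a} \le \widehat{K_a}\) and \(\int_{\partial\Omega} \widehat{K_a}(x) \d\sigma(a) = 1\) is arguably cleaner than \Cref{lem:fatou_measure}, modulo the joint measurability of \((a, x) \mapsto \widehat{P_a}(x)\) that Tonelli requires; the paper sidesteps this by working with \(\rho_k * P_a\), which by duality is a pointwise limit of continuous functions of \(a\).

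Two steps are genuinely incomplete. First, the ``standard mollification'' extending the Green identity from \(g \d{x}\) to a general measure is where the real work sits for fixed \(k\): since \(\norm{\rho_j * \mu - \mu}_{\cM(\Omega)}\) need not tend to \(0\), the convergence of \(\int_{\partial\Omega} \phi \, (\partial u_{k, j} / \partial n) \d\sigma\) as \(j \to \infty\) does not follow from the linear estimate; one needs the weak \(W^{1, p}\) compactness argument that the paper carries out to prove \eqref{eq:1264}. Second, and more seriously, the \(L^1(\partial\Omega)\) convergence \(\partial u_k / \partial n \to \partial u / \partial n\), which you correctly single out as the delicate point, is left unproved: the ``Riesz datum / uniqueness'' workaround is not an argument, because the distributional normal derivative is not continuous under mere \(L^1(\Omega)\) convergence of the solutions. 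The fix is elementary and is exactly the paper's device: \(u_k - u\) solves \eqref{eq:dp} with potential \(V_k\) and datum \((V - V_k) u \d{x}\), so \eqref{eq:L1_estimate_distributional_normal_derivative} gives \(\norm{\partial u_k / \partial n - \partial u / \partial n}_{L^1(\partial\Omega)} \le 2 \norm{(V - V_k) u}_{L^1(\Omega)} \to 0\) by dominated convergence, since \(V u \in L^1(\Omega)\). With these two repairs, your argument closes.
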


We prove in \cite{PonceWilmet:2020} that, for every \(a \in \partial\Omega\),
\begin{enumerate}[(i)]
\item either \(P_a = 0\) almost everywhere on \(\Omega\),
\item or \(P_a\) is a distributional solution of \eqref{eq:bvp} in the sense that \(VP_a \in L^1(\Omega; d_{\partial\Omega} \d{x})\) and
\[
\frac{\partial \zeta}{\partial n}(a) = \int_\Omega P_a (- \Delta \zeta + V \zeta) \d{x} \quad \text{for every \(\zeta \in C_0^\infty(\cl\Omega)\),}
\]
where \(d_{\partial\Omega} : \Omega \to \R_+\) is the distance to the boundary and \(C_0^\infty(\cl\Omega)\) is the set of functions \(\zeta \in C^\infty(\cl\Omega)\) such that \(\zeta = 0\) on \(\partial\Omega\).
\end{enumerate}
In particular, \eqref{eq:Pa_subsolution} always holds with equality. Using this fact and \Cref{thm:representation_formula}, we give a new proof of the Hopf boundary point lemma from \cite{OrsinaPonce:2018}.

The main issue concerning the validity of the Hopf lemma for the Schr\"odinger operator is that the growth of \(V\) near the boundary may be so strong that the requirements that
\begin{equation}
\label{eq:requirements}
V u \in L^1(\Omega) \quad \text{and} \quad \frac{\partial u}{\partial n} > 0 \text{ almost everywhere on \(\partial\Omega\)}
\end{equation}
are incompatible. Such is the case when \(V = 1 / d_{\partial\Omega}^2\) as one cannot have both properties in \eqref{eq:requirements} when \(u \in C^\infty(\cl\Omega)\) (and even for nonsmooth solutions). The notion of Hopf potential rules out that type of obstruction. We first recall

\begin{definition}
\label{def:hopf_potential}
We say that \(V \in L_\loc^1(\Omega)\) is a Hopf potential whenever there exists a nonnegative function \(\theta \in W_0^{1, 1}(\Omega)\) such that \(\Delta \theta\) is a finite Borel measure on \(\Omega\), \(V \theta \in L^1(\Omega)\) and \(\partial \theta / \partial n > 0\) almost everywhere on \(\partial\Omega\).
\end{definition}

A more general definition is presented in \cite{OrsinaPonce:2018} but one shows that they are equivalent; see \cite{OrsinaPonce:2018}*{Proposition~2.5}. As an example, every potential \(V \in L^1(\Omega; d_{\partial\Omega} \d{x})\) is a Hopf potential. Indeed, one can take as \(\theta\) in \Cref{def:hopf_potential} any nonnegative function \(\zeta \in C_0^\infty(\cl\Omega)\) such that \(\partial \zeta / \partial n > 0\) on \(\partial\Omega\).

\begin{theorem}
\label{thm:hopf_lemma}
If\/ \(V\) is a nonnegative Hopf potential and if \(u\) is the solution of \eqref{eq:dp} involving a nonnegative finite Borel measure \(\mu\) on \(\Omega\), \(\mu \neq 0\), then
\[
\frac{\partial u}{\partial n} > 0 \quad \text{almost everywhere on \(\partial\Omega\).}
\]
\end{theorem}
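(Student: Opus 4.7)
The plan is to combine \Cref{thm:representation_formula} with the Hopf potential hypothesis in three steps: first apply the representation formula to an auxiliary test function to ensure that the kernel $\widehat{P_a}$ does not vanish identically, then upgrade this non-triviality to pointwise positivity via a strong minimum principle for $-\Delta + V$, and finally apply the representation formula to $u$.

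\emph{Step 1: non-triviality of the kernel.} Let $\theta \in W_0^{1,1}(\Omega)$ be the function provided by \Cref{def:hopf_potential}. Since $\Delta\theta$ is a finite Borel measure on $\Omega$ and $V\theta \in L^1(\Omega)$, the signed measure $\mu_\theta \defeq -\Delta\theta + V\theta$ is a finite Borel measure on $\Omega$, and $\theta$ itself satisfies \eqref{eq:dp} with datum $\mu_\theta$. Applying \Cref{thm:representation_formula} to $\theta$ yields, for a.e.\ $a \in \partial\Omega$,
\[
\frac{\partial\theta}{\partial n}(a) = \int_\Omega \widehat{P_a}\d\mu_\theta.
\]
As $\partial\theta/\partial n > 0$ almost everywhere on $\partial\Omega$, the right-hand side is strictly positive for a.e.\ $a \in \partial\Omega$. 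Combined with $\widehat{P_a} \ge 0$, this forces $\widehat{P_a}$ to be strictly positive on a set of positive $\mu_\theta^+$-measure; in particular, $\widehat{P_a}(x_0) > 0$ for some $x_0 \in \Omega$, so $P_a$ is not almost-everywhere zero on $\Omega$.

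\emph{Step 2: strong positivity of $\widehat{P_a}$.} Recall from the discussion following \Cref{thm:representation_formula} that \eqref{eq:Pa_subsolution} always holds with equality, so $-\Delta P_a + V P_a = 0$ in the sense of distributions on $\Omega$. Given Step~1, the plan is to invoke a strong minimum principle for the Schr\"odinger operator $-\Delta + V$ with nonnegative $V \in L_\loc^1(\Omega)$ to conclude that, for a.e.\ $a \in \partial\Omega$, $\widehat{P_a}(x) > 0$ for every $x \in \Omega$. This is the step I expect to be the main technical point, since the low regularity of $V$ precludes a direct application of the classical Harnack inequality. A natural route is to approximate $P_a$ by the solutions $P_a^k$ associated with the truncated potential $\min\{V, k\}$ (each of which is strictly positive on $\Omega$ by the classical strong minimum principle) and pass to the limit via a local Harnack-type estimate; alternatively one may invoke an available strong minimum principle for Schr\"odinger operators with nonnegative $L_\loc^1$ potentials.

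\emph{Step 3: conclusion.} With $\widehat{P_a} > 0$ everywhere on $\Omega$ for a.e.\ $a \in \partial\Omega$, apply \Cref{thm:representation_formula} to $u$:
\[
\frac{\partial u}{\partial n}(a) = \int_\Omega \widehat{P_a}\d\mu.
\]
Since $\mu$ is a nonnegative nonzero finite Borel measure on $\Omega$ and $\widehat{P_a} > 0$ everywhere, the right-hand side is strictly positive for a.e.\ $a \in \partial\Omega$, yielding the theorem.
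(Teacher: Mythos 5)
Your Step 1 is exactly the paper's \Cref{lem:nonzero_hopf_potential}, and the overall architecture (non-triviality of the kernel, then pointwise positivity, then plug into \Cref{thm:representation_formula}) matches the paper. The gap is in Step 2: the claim that \(\widehat{P_a}(x) > 0\) for \emph{every} \(x \in \Omega\) is false in general, and neither of your suggested routes can deliver it. The strong maximum principle available for \(-\Delta + V\) with nonnegative \(V \in L_\loc^1(\Omega)\) (Ancona, Trudinger, Brezis--Ponce) only yields \(P_a > 0\) \emph{almost everywhere} with respect to Lebesgue measure; upgrading this to everywhere-positivity of the precise representative is precisely what can fail when \(V\) has strong interior singularities --- this is the ``failure of the strong maximum principle'' studied in Orsina--Ponce (2020). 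The approximation route also breaks down: the truncated solutions \(P_{a,k}\) are each positive everywhere, but they \emph{decrease} to \(\widehat{P_a}\), and a Harnack constant uniform in \(k\) is unavailable since \(\min\{V,k\}\) is unbounded in \(k\). The Hopf potential hypothesis constrains \(V\) only near \(\partial\Omega\) and does nothing to prevent an interior singularity such as \(V(x) = 1/\abs{x - x_0}^\alpha\) with \(\alpha\) large, at which every \(\widehat{\zeta_f}\), and hence \(\widehat{P_a}\), vanishes.

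This matters for Step 3 because \(\mu\) is a general finite Borel measure: almost-everywhere positivity of \(\widehat{P_a}\) does not prevent \(\int_\Omega \widehat{P_a} \d\mu = 0\) when \(\mu\) charges a Lebesgue-null set (e.g.\ a Dirac mass at a point where \(\widehat{P_a}\) vanishes). The paper closes this gap with two ingredients you are missing. First, \Cref{lem:zero-set} shows, via the comparison principle \(P_a \ge \zeta_{H(P_a)}\) of \Cref{lem:comparison_principle}, that \(\widehat{P_a} > 0\) on the set \(G\) of points where some \(\widehat{\zeta_f}\) is nonzero --- not on all of \(\Omega\). Second, Theorem~1.4 of Orsina--Ponce (2020) states that the Dirichlet problem with nonnegative datum \(\mu\) is solvable if and only if \(\mu(\Omega \setminus G) = 0\); since \(u\) exists and \(\mu \neq 0\), the measure \(\mu\) is carried by \(G\) and gives it positive mass, so \(\int_\Omega \widehat{P_a} \d\mu > 0\). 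Without this solvability characterization (or some substitute showing that \(\mu\) cannot concentrate where \(\widehat{P_a}\) vanishes), your argument does not go through.
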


\section{Proof of \Cref{thm:representation_formula}}

We denote by \(\cM(\Omega)\) the vector space of finite Borel measures on \(\Omega\), equipped with the norm
\[
\norm{\mu}_{\cM(\Omega)} = \abs{\mu}(\Omega).
\]
We recall that if \(u\) is the solution of \eqref{eq:dp} involving \(\mu \in \cM(\Omega)\), then for all \(1 \le p < \frac{N}{N - 1}\), we have \(u \in W_0^{1, p}(\Omega)\) and
\begin{equation}
\label{eq:W1p_estimate}
\norm{u}_{W^{1, p}(\Omega)} \le C \norm{\mu}_{\cM(\Omega)}
\end{equation}
for some constant \(C > 0\) depending on \(p\) and \(\Omega\). This can be obtained from elliptic estimates due to Littman, Stampacchia and Weinberger~\cite{LittmanStampacchiaWeinberger:1963}*{Theorem~5.1} and from the absorption estimate
\begin{equation}
\label{eq:absorption_estimate}
\norm{V u}_{L^1(\Omega)} \le \norm{\mu}_{\cM(\Omega)}\,;
\end{equation}
see~\cite{BrezisMarcusPonce:2007}*{Proposition~4.B.3} or \cite{Ponce:2016}*{Proposition~21.5}.

We also recall that if \(u \in W_0^{1, 1}(\Omega)\) is such that \(\Delta u \in \cM(\Omega)\), then the distributional normal derivative of \(u\) satisfies
\[
\norm[\bigg]{\frac{\partial u}{\partial n}}_{L^1(\partial\Omega)} \le \norm{\Delta u}_{\cM(\Omega)}\,;
\]
see \cite{BrezisPonce:2008}*{Theorem~1.2} or \cite{Ponce:2016}*{Proposition~7.3}. In the case where \(u\) is the solution of \eqref{eq:dp} involving \(\mu \in \cM(\Omega)\), one deduces from the above inequality and the absorption estimate \eqref{eq:absorption_estimate} that
\begin{equation}
\label{eq:L1_estimate_distributional_normal_derivative}
\norm[\bigg]{\frac{\partial u}{\partial n}}_{L^1(\partial\Omega)} \le 2 \norm{\mu}_{\cM(\Omega)}.
\end{equation}
We use this estimate to deduce the following property:

\begin{lemma}
\label{lem:fatou_measure}
For every nonnegative measure \(\mu \in \cM(\Omega)\), we have
\[
\widehat{P_a} \in L^1(\Omega; \mu) \quad \text{for almost every \(a \in \partial\Omega\).}
\]
\end{lemma}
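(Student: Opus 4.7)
The plan is to establish the lemma by Fubini--Tonelli: since \(\widehat{P_a} \ge 0\), the conclusion reduces to the finiteness of \(\int_{\partial\Omega} \int_\Omega \widehat{P_a} \d\mu \d\sigma(a)\). After exchanging the order of integration, this becomes \(\int_\Omega \bigl(\int_{\partial\Omega} \widehat{P_a}(x) \d\sigma(a)\bigr) \d\mu(x)\), so the strategy is to identify the inner integral with the precise representative of a single auxiliary \(L^\infty(\Omega)\) function and to bound it uniformly on \(\Omega\) by \(2\).

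The first step is to set \(\Phi(x) \defeq \int_{\partial\Omega} P_a(x) \d\sigma(a)\) and to show \(\norm{\Phi}_{L^\infty(\Omega)} \le 2\). Testing against a nonnegative \(f \in L^\infty(\Omega)\), the duality identity reads \(\int_\Omega P_a f \d{x} = \frac{\widehat{\partial \zeta_f}}{\partial n}(a)\); since \(\zeta_f \ge 0\) by the maximum principle, \(\frac{\widehat{\partial \zeta_f}}{\partial n}\) is itself nonnegative pointwise on \(\partial\Omega\) (from its construction as the limit of \(\partial \zeta_{f, k} / \partial n \ge 0\) in \eqref{eq:approx_dp}, with \(n\) the inward normal). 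Integrating in \(a\) over \(\partial\Omega\) and invoking \eqref{eq:L1_estimate_distributional_normal_derivative} gives
\[
\int_\Omega \Phi f \d{x} = \int_{\partial\Omega} \frac{\widehat{\partial \zeta_f}}{\partial n}(a) \d\sigma(a) \le 2 \norm{f}_{L^1(\Omega)},
\]
and the arbitrariness of \(f\) yields the claim.

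Next, I would exploit subharmonicity to derive the pointwise identity
\[
\widehat{\Phi}(x) = \int_{\partial\Omega} \widehat{P_a}(x) \d\sigma(a) \quad \text{for every \(x \in \Omega\).}
\]
Indeed, \eqref{eq:Pa_subsolution} implies that each \(P_a\) is subharmonic, hence so is \(\Phi\); for any nonnegative subharmonic function the averages \(\frac{1}{\abs{B_r(x)}} \int_{B_r(x)} \cdot \d{y}\) are monotone nondecreasing in \(r\) and converge to the precise representative as \(r \to 0\). Applying this to \(\Phi\) on the left-hand side and, on the right-hand side, combining Fubini with dominated convergence (the domination coming from a fixed radius \(r_0\) and the bound on \(\Phi\)) for each \(P_a\), yields the identity. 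Since \(\widehat{\Phi}(x) \le \norm{\Phi}_{L^\infty(\Omega)} \le 2\) holds for every \(x \in \Omega\), the iterated integral is at most \(2 \mu(\Omega)\), and Fubini gives the lemma.

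The hard part will be the measurability verification required by Fubini, namely the joint Borel measurability of \((a, x) \mapsto P_a(x)\) and of \((a, x) \mapsto \widehat{P_a}(x)\). I expect this to follow from the approximation scheme of Lemma~\ref{lem:measure_limit}: the truncated duality solutions are continuous in both variables (being built from Green functions of elliptic operators with bounded coefficients), so pointwise convergence along the approximation transfers joint measurability to \(P_a\), and passing to the limit of averages on balls then transfers it to \(\widehat{P_a}\).
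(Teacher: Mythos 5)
Your argument is correct in outline but follows a genuinely different route from the paper. You exchange the two integrations first and bound the $x$-marginal $\Phi(x)=\int_{\partial\Omega}P_a(x)\,\d\sigma(a)$ in $L^\infty$ by duality against arbitrary nonnegative $f\in L^\infty(\Omega)$, then pass to precise representatives using the monotonicity of ball averages of subharmonic functions. The paper instead keeps the order $\int_{\partial\Omega}(\int_\Omega\cdots\d\mu)\,\d\sigma$ throughout: it mollifies $\mu$, observes via the duality definition and Fubini that $\int_\Omega \rho_k*P_a\,\d\mu$ equals $\widehat{\partial u_k}/\partial n(a)$ for the solution $u_k$ with datum $\rho_k*\mu$, applies the estimate \eqref{eq:L1_estimate_distributional_normal_derivative} to $u_k$, and concludes by Fatou's lemma in $a$ using $\rho_k*P_a\to\widehat{P_a}$ everywhere. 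Both routes produce the same bound $2\norm{\mu}_{\cM(\Omega)}$ on the iterated integral, and both ultimately rest on the same two ingredients (the duality identity defining $P_a$ and the $L^1(\partial\Omega)$ bound on normal derivatives), so neither is more general than the other.

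The real cost of your route is the point you flag yourself: Tonelli in both directions requires joint $\sigma\otimes\mathcal{L}^N$ (and later $\sigma\otimes\mu$) measurability of $(a,x)\mapsto P_a(x)$ and $(a,x)\mapsto\widehat{P_a}(x)$, which is not free and is not established anywhere in the paper. The paper's proof is engineered precisely to avoid this: the inner integral $\int_\Omega\rho_k*P_a\,\d\mu$ is recognized as the normal derivative of a concrete solution, hence is already known to be a measurable function of $a$, and Fatou then yields measurability of $a\mapsto\int_\Omega\widehat{P_a}\,\d\mu$ as a pointwise limit. Your proposed fix via \Cref{lem:measure_limit} would require joint continuity (or at least Carath\'eodory regularity) of $(a,x)\mapsto\widehat{P_{a,k}}(x)$ for the truncated potentials, which is plausible but needs an argument. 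A cleaner repair within your scheme: for fixed $x$ and $r<d_{\partial\Omega}(x)$, the average $\fint_{B_r(x)}P_a$ equals $\widehat{\partial\zeta_{f}}/\partial n(a)$ with $f=\abs{B_r(x)}^{-1}\chi_{B_r(x)}$, hence is a Borel function of $a$ (an everywhere pointwise limit of the continuous functions $\partial\zeta_{f,k}/\partial n$), it is continuous in $x$ for fixed $a$ and $r$, and $\widehat{P_a}(x)$ is the decreasing limit of these as $r\to0$; this gives the needed joint measurability directly from the duality definition, without invoking the truncation scheme. With that supplied, your proof closes. (Note also that `$\fint$' is not defined in this paper's preamble, so write the averages out explicitly if you incorporate this.)
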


\begin{proof}
Let \(u_k\) be the solution of
\begin{equation}
\begin{dcases}
\begin{aligned}
- \Delta u_k + V u_k &= \rho_k * \mu && \text{in \(\Omega\),} \\
u_k &= 0 && \text{on \(\partial\Omega\),}
\end{aligned}
\end{dcases}
\end{equation}
where \((\rho_{k})\) is a sequence of radial mollifiers in \(C_{c}^{\infty}(\R^{N})\) such that \(\supp \rho_k \subset B_{1/k}(0)\) and \(\rho_k * \mu : \R^N \to \R\) is the smooth function defined for every \(x \in \R^N\) by
\[
\rho_k * \mu(x) = \int_\Omega \rho_k(x - y) \d\mu(y).
\]
Then, by definition of \(P_a\) and Fubini's theorem,
\[
\frac{\widehat{\partial u_k}}{\partial n}(a) = \int_\Omega P_a \, \rho_k * \mu \d{x} = \int_\Omega \rho_k * P_a \d\mu \quad \text{for every \(a \in \partial\Omega\).}
\]
Since
\[
\frac{\partial u_k}{\partial n} = \frac{\widehat{\partial u_k}}{\partial n} \quad \text{almost everywhere on \(\partial\Omega\),}
\]
we deduce from \eqref{eq:L1_estimate_distributional_normal_derivative} that
\[
\norm[\bigg]{\widehat{\frac{\partial u_k}{\partial n}}}_{L^1(\partial\Omega)} \le 2 \norm{\rho_k * \mu}_{L^1(\Omega)} \le 2 \norm{\mu}_{\cM(\Omega)}.
\]
Since \(P_a\) and \(\mu\) are nonnegative, we thus have
\[
\int_{\partial\Omega} \paren[\bigg]{\int_\Omega \rho_k * P_a \d\mu} \d\sigma(a) \le 2 \norm{\mu}_{\cM(\Omega)}.
\]
On the other hand, since every point of \(\Omega\) is a Lebesgue point of \(P_a\),
\[
\rho_k * P_a \to \widehat{P_a} \quad \text{everywhere on \(\Omega\)\,;}
\]
see \cite{Ponce:2016}*{Proposition~8.4}. It then follows from Fatou's lemma that
\[
\int_{\partial\Omega} \paren[\bigg]{\int_\Omega \widehat{P_a} \d\mu} \d\sigma(a) \le \liminf_{k \to \infty} \int_{\partial\Omega} \paren[\bigg]{\int_\Omega \rho_k * P_a \d\mu} \d\sigma(a) \le 2 \norm{\mu}_{\cM(\Omega)}.
\]
In particular,
\[
\int_\Omega \widehat{P_a} \d\mu < + \infty \quad \text{for almost every \(a \in \partial\Omega\).}\qedhere
\]
\end{proof}

We now prove that the precise representative \(\widehat{P_{a}}\) can also be recovered from an approximation scheme involving the Schr\"odinger operator with truncated potentials.

\begin{lemma}
\label{lem:measure_limit}
For every \(a \in \partial\Omega\), denote by \(P_{a, k}\) the distributional solution of
\[
\begin{dcases}
\begin{aligned}
- \Delta P_{a, k} + V_k P_{a, k} &= 0 && \text{in \(\Omega\),} \\
P_{a, k} &= \delta_a && \text{on \(\partial\Omega\),}
\end{aligned}
\end{dcases}
\]
where \((V_{k})\) is a nondecreasing sequence of nonnegative functions in \(L^{\infty}(\Omega)\) that converges almost everywhere to \(V\) on \(\Omega\). Then, for every \(a \in \partial\Omega\), the sequence \((\widehat{P_{a, k}})\) is non-increasing and converges everywhere to \(\widehat{P_a}\) on \(\Omega\).
\end{lemma}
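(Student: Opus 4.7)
The plan is to show in succession that (i) the sequence $(P_{a, k})$ is non-increasing almost everywhere on $\Omega$, (ii) its almost-everywhere limit is $P_a$, and (iii) both properties transfer to the precise representatives through an exchange of two monotone limits exploiting the subharmonicity of the $P_{a, k}$. The bulk of the setup is in (i)--(ii); the real obstacle is the double-limit exchange in (iii).

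For (i), take a nonnegative $f \in L^\infty(\Omega)$ and let $\zeta_{f, k}$ solve $- \Delta \zeta + V_k \zeta = f$ with vanishing boundary data. Since $V_k \in L^\infty(\Omega)$, standard elliptic theory gives $\zeta_{f, k} \in C^1(\cl\Omega)$ and $\zeta_{f, k} \ge 0$ by the weak maximum principle. The difference $w_k \defeq \zeta_{f, k} - \zeta_{f, k + 1}$ satisfies
\[
- \Delta w_k + V_{k + 1} w_k = (V_{k + 1} - V_k) \zeta_{f, k} \ge 0 \quad \text{in \(\Omega\),}
\]
with $w_k = 0$ on $\partial\Omega$, so $w_k \ge 0$ by the maximum principle and hence $\partial \zeta_{f, k} / \partial n \ge \partial \zeta_{f, k + 1} / \partial n \ge 0$ pointwise on $\partial\Omega$. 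The duality identity $\int_\Omega P_{a, k} f \d{x} = \partial \zeta_{f, k} / \partial n(a)$ then yields $P_{a, k} \ge P_{a, k + 1}$ almost everywhere on $\Omega$. For (ii), the non-increasing sequence $(P_{a, k})$ converges almost everywhere to some $Q \in L^1(\Omega)$, dominated by $P_{a, 1}$, and dominated convergence gives $\int_\Omega P_{a, k} f \d{x} \to \int_\Omega Q f \d{x}$ for every $f \in L^\infty(\Omega)$. On the other hand, the construction of $\widehat{\partial \zeta_f} / \partial n$ in \cite{PonceWilmet:2020} (combined, if needed, with a sandwich of $(V_k)$ against the canonical truncations $(\min \set{V, m})$ to handle arbitrary bounded approximations) gives $\partial \zeta_{f, k} / \partial n(a) \to \widehat{\partial \zeta_f} / \partial n(a) = \int_\Omega P_a f \d{x}$; hence $Q = P_a$ almost everywhere on $\Omega$.

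For (iii), each $P_{a, k}$ is nonnegative and subharmonic on $\Omega$ since $- \Delta P_{a, k} = - V_k P_{a, k} \le 0$, so its precise representative is the decreasing-in-$r$ limit
\[
\widehat{P_{a, k}}(x) = \inf_{r > 0} \frac{1}{\abs{B_r(x)}} \int_{B_r(x)} P_{a, k} \d{y} \quad \text{for every \(x \in \Omega\),}
\]
and analogously for $P_a$, which is subharmonic by \eqref{eq:Pa_subsolution}. Writing $A_k(r)$ and $A(r)$ for the two averages, step (ii) and dominated convergence yield $A_k(r) \downarrow A(r)$ for each $r > 0$; moreover $A_k(r)$ is non-decreasing in $r$ by subharmonicity and non-increasing in $k$ by (i). Taking $\inf_r$ in $A_k(r) \ge A_{k + 1}(r) \ge A(r)$ produces the monotonicity $\widehat{P_{a, k}}(x) \ge \widehat{P_{a, k + 1}}(x) \ge \widehat{P_a}(x)$. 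Setting $L(x) \defeq \lim_{k \to \infty} \widehat{P_{a, k}}(x)$, the chain $L(x) \le \widehat{P_{a, k}}(x) \le A_k(r)$ permits letting $k \to \infty$ and then $r \to 0$ to conclude $L(x) \le \widehat{P_a}(x)$, whence $L(x) = \widehat{P_a}(x)$.

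The main obstacle is exactly this $\inf$--$\inf$ interchange in (iii); it reduces to a routine exchange once one has both correct monotonicities at hand, but crucially rests on the almost-everywhere monotone convergence $P_{a, k} \downarrow P_a$ prepared in (i)--(ii), which is where the bulk of the argument is spent.
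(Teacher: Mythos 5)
Your argument is correct in substance but follows a genuinely different route from the paper's at the decisive step. The paper establishes the monotonicity of \((\widehat{P_{a,k}})\) by citing comparison principles from \cite{PonceWilmet:2020}, and then handles the pointwise convergence of the precise representatives by writing \(P_{a,k} = K_a - z_k\) with \(K_a\) harmonic and \(z_k\) solving \(-\Delta z_k = V_k P_{a,k}\), so that \((\widehat{z_k})\) is a nondecreasing, locally uniformly bounded sequence of nonnegative \emph{superharmonic} functions, to which Lemma~4.12 of \cite{MalusaOrsina:1996} applies directly. You instead work with the \emph{subharmonic} functions \(P_{a,k}\) themselves and perform the \(\inf_k\)--\(\inf_r\) interchange by hand, using that the ball averages \(A_k(r)\) are nondecreasing in \(r\) (subharmonicity) and nonincreasing in \(k\) (your step (i)) together with \(A_k(r)\downarrow A(r)\); the two-sided squeeze \(\widehat{P_a}(x)\le L(x)\le A(r)\) is clean and correct, and it buys you independence from the external lemma on increasing families of superharmonic functions, at the cost of redoing the duality-based monotonicity in (i). The one point you should tighten is the identification in (ii): the pointwise convergence \(\partial\zeta_{f,k}/\partial n(a)\to\widehat{\partial\zeta_f}/\partial n(a)\) is constructed in \cite{PonceWilmet:2020} for the canonical truncations \(\min\set{V,m}\), and your parenthetical ``sandwich'' for a general nondecreasing \((V_k)\) needs the extra observation that \(V_k\le\min\set{V,\norm{V_k}_{L^\infty}}\) gives the lower bound, while the upper bound requires comparing with \(\min\set{V_k,m}\uparrow\min\set{V,m}\) and invoking continuity of the normal derivative with respect to bounded potentials (e.g.\ via \(W^{2,p}\) estimates). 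This is fillable, and the paper itself outsources the analogous identification \(P_{a,k}\to P_a\) a.e.\ to \cite{PonceWilmet:2020}, but as written it is the only step of your proof that is asserted rather than proved.
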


\begin{proof}
Let \(a \in \partial\Omega\). One shows that the sequence \((\widehat{P_{a, k}})\) is nonnegative and non-increasing; this follows from a straightforward counterpart of the weak maximum principle \cite{PonceWilmet:2020}*{Lemma~2.2} for distributional solutions of \eqref{eq:bvp} and from the comparison principle \cite{PonceWilmet:2020}*{Lemma~2.3}. We denote by \(z_k\) the solution of
\[
\begin{dcases}
\begin{aligned}
- \Delta z_k &= V_k P_{a, k} && \text{in \(\Omega\),} \\
z_k &= 0 && \text{on \(\partial\Omega\),}
\end{aligned}
\end{dcases}
\]
and by \(K_a\) the solution of
\begin{equation}
\label{eq:Ka_dp}
\begin{dcases}
\begin{aligned}
- \Delta K_a &= 0 && \text{in \(\Omega\),} \\
K_a &= \delta_a && \text{on \(\partial\Omega\).}
\end{aligned}
\end{dcases}
\end{equation}
Then by uniqueness of solutions \(z_k = K_a - P_{a, k}\) almost everywhere on \(\Omega\), and thus
\[
\widehat{z_k} = \widehat{K_a} - \widehat{P_{a, k}} \quad \text{on \(\Omega\).}
\]
Hence \((\widehat{z_k})\) is a nondecreasing and locally uniformly bounded sequence of nonnegative superharmonic functions. Its pointwise limit \(z\) thus coincides with \(\hat{z}\) on \(\Omega\)\,; see \cite{MalusaOrsina:1996}*{Lemma~4.12}. Therefore, we have
\[
\hat{z} = \widehat{K_a} - \lim_{k \to \infty} \widehat{P_{a, k}} \quad \text{on \(\Omega\).}
\]
Since \(z = K_a - P_a\) almost everywhere on \(\Omega\), and \(z\) is a locally bounded superharmonic function, we obtain \(\widehat{P_a} = \lim\limits_{k \to \infty} \widehat{P_{a, k}}\) as claimed.
\end{proof}

As a preliminary step in the proof of Theorem~\ref{thm:representation_formula}, we begin with the following case:

\begin{lemma}
\label{lem:measure_compact}
Theorem~\ref{thm:representation_formula} holds when \(V\) is bounded and \(\supp \mu\) is a compact subset of \(\Omega\).
\end{lemma}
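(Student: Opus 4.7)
The plan is to mollify \(\mu\) into smooth data \(\mu_k \defeq \rho_k * \mu\), represent the (now classical) normal derivatives of the corresponding solutions \(u_k\) via the duality identity for \(P_a\), and match the resulting pointwise limit on \(\partial\Omega\) with the distributional limit coming from the integration by parts identity \eqref{eq:integral_identity_distributional_normal_derivative}.

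Fix a compact set \(K'\) with \(\supp \mu \subset K' \Subset \Omega\). Since \(P_a\) is nonnegative and subharmonic, the precise representative \(\widehat{P_a}\) is bounded on \(K'\) by some \(M(a) < \infty\); in particular \(\widehat{P_a} \in L^1(\Omega; \abs{\mu})\). For \(k\) sufficiently large, \(\mu_k \in C_c^\infty(\Omega)\) by compactness of \(\supp\mu\), and boundedness of \(V\) together with elliptic regularity gives \(u_k \in C^1(\cl\Omega)\). The defining identity of the duality solution combined with Fubini yields
\[
\frac{\partial u_k}{\partial n}(a) = \int_\Omega P_a \mu_k \d{x} = \int_\Omega \rho_k * P_a \d\mu \quad \text{for every \(a \in \partial\Omega\).}
\]
For \(y \in \supp \mu\) and \(k\) large, \(\rho_k * P_a(y) \to \widehat{P_a}(y)\) with \(\rho_k * P_a(y) \le M(a)\), so dominated convergence on \(\Omega\) gives \(\partial u_k/\partial n(a) \to g(a) \defeq \int_\Omega \widehat{P_a} \d\mu\) together with the uniform bound \(\abs[\big]{\partial u_k/\partial n(a)} \le \norm{\mu}_{\cM(\Omega)} M(a)\).

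Next I would show \(M \in L^1(\partial\Omega)\). The subharmonic mean value inequality yields \(M(a) \le C \norm{P_a}_{L^1(\Omega)}\) with \(C\) depending only on \(K'\) and \(\Omega\). Testing the definition of \(P_a\) against indicator functions and applying \eqref{eq:integral_identity_distributional_normal_derivative} with the bounded classical solution \(\phi_1\) of \(-\Delta \phi_1 + V \phi_1 = 0\) in \(\Omega\) with \(\phi_1 = 1\) on \(\partial\Omega\), one identifies \(\int_{\partial\Omega} P_a \d\sigma(a) = \phi_1\) in \(L^1(\Omega)\) by Fubini, whence
\[
\int_{\partial\Omega} \norm{P_a}_{L^1(\Omega)} \d\sigma(a) = \int_\Omega \phi_1 \d{x} < \infty.
\]
Therefore \(M \in L^1(\partial\Omega)\), and dominated convergence on \(\partial\Omega\) upgrades the pointwise convergence above to \(\partial u_k/\partial n \to g\) in \(L^1(\partial\Omega)\).

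To identify \(g\) with \(\partial u/\partial n\), I would test against arbitrary \(\phi \in C^\infty(\cl\Omega)\). Applying \eqref{eq:integral_identity_distributional_normal_derivative} separately to \(u_k\) and \(u\) and substituting \(\Delta u_k = V u_k - \mu_k\), \(\Delta u = Vu - \mu\), one obtains
\[
\int_{\partial\Omega} \phi \frac{\partial u_k}{\partial n} \d\sigma = -\int_\Omega \nabla u_k \cdot \nabla \phi \d{x} - \int_\Omega \phi V u_k \d{x} + \int_\Omega \phi \mu_k \d{x},
\]
and the analogous expression for \(u\). The estimate \eqref{eq:W1p_estimate} applied to \(u_k\), compactness of \(W^{1, p}(\Omega) \hookrightarrow L^1(\Omega)\), and uniqueness for \eqref{eq:dp} together force \(u_k \to u\) in \(L^1(\Omega)\) and \(\nabla u_k \weakto \nabla u\) in \(L^p(\Omega)\) for \(p < \frac{N}{N-1}\); meanwhile \(\int_\Omega \phi \mu_k \d{x} = \int_\Omega \rho_k * \phi \d\mu \to \int_\Omega \phi \d\mu\) by uniform convergence of \(\rho_k * \phi\) on \(\supp \mu\). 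Hence \(\int_{\partial\Omega} \phi \partial u_k/\partial n \d\sigma \to \int_{\partial\Omega} \phi \partial u/\partial n \d\sigma\). Combined with the \(L^1(\partial\Omega)\) convergence of \(\partial u_k/\partial n\) to \(g\), this forces \(g = \partial u/\partial n\) almost everywhere on \(\partial\Omega\). The most delicate point is the integrability \(M \in L^1(\partial\Omega)\): it is what allows the pointwise limit \(g\) to be upgraded to a strong \(L^1\) limit and then matched with the weak limit obtained from the integral identity.
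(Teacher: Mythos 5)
Your proposal is correct and follows the same skeleton as the paper's proof: mollify \(\mu\), use the duality identity plus Fubini to write \(\frac{\partial u_k}{\partial n}(a) = \int_\Omega \rho_k * P_a \d\mu\), pass to the pointwise limit on \(\partial\Omega\), and match this with the weak limit obtained by testing the integration-by-parts identity \eqref{eq:integral_identity_distributional_normal_derivative} against \(\phi \in C^\infty(\cl\Omega)\). The one place where you genuinely diverge is the domination step needed to integrate the pointwise limit over \(\partial\Omega\). The paper uses the comparison \(0 \le P_a \le K_a\) with the harmonic Poisson kernel \(K_a\) and the fact that \(K_a \le C\) on \(\supp\mu\) with \(C\) \emph{independent of} \(a\) (together with \(\rho_k * K_a \le K_a\) from harmonicity and the radial mollifier), which gives a uniform bound on \((\partial u_k/\partial n)\) and makes dominated convergence immediate. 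You instead work with the \(a\)-dependent bound \(M(a) = \sup_{K'} \widehat{P_a} \le C \norm{P_a}_{L^1(\Omega)}\) from the subharmonic mean value inequality, and then must verify \(\int_{\partial\Omega} \norm{P_a}_{L^1(\Omega)} \d\sigma(a) < \infty\); your identity \(\int_{\partial\Omega} P_a \d\sigma(a) = \phi_1\) is valid (and, more simply, \(\norm{P_a}_{L^1(\Omega)} = \widehat{\partial\zeta_1}/\partial n\,(a)\), whose \(L^1(\partial\Omega)\) norm is at most \(2\abs{\Omega}\) by \eqref{eq:L1_estimate_distributional_normal_derivative}). This buys you strong \(L^1(\partial\Omega)\) convergence of \(\partial u_k/\partial n\) rather than merely uniformly bounded pointwise convergence, at the cost of an extra integrability lemma; either suffices. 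Two small points to tighten: take \(K'\) to be a compact \emph{neighborhood} of \(\supp\mu\) so that \(\rho_k * P_a \le M(a)\) on \(\supp\mu\) for large \(k\), and note that the measurability of \(a \mapsto \norm{P_a}_{L^1(\Omega)}\) follows from its identification as a pointwise limit of the continuous normal derivatives \(\partial\zeta_{1,k}/\partial n\).
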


\begin{proof}
Let \((\rho_k)\) be a sequence of radial mollifiers in \(C_c^\infty(\R^N)\) such that \(\supp \rho_k \subset B_{1/k}(0)\) and denote by \(u_k\) the solution of \eqref{eq:dp} with datum \(\rho_k * \mu \d x\). We first show that
\begin{equation}
\label{eq:1264}
\lim_{k \to \infty}{\int_{\partial\Omega} \frac{\partial u_k}{\partial n} \psi \d\sigma} = \int_{\partial\Omega} \frac{\partial u}{\partial n} \psi \d\sigma \quad \text{for every \(\psi \in C^0(\partial\Omega)\).}
\end{equation}
To this end, we recall that from \eqref{eq:W1p_estimate} we have, for every \(1 < p < \frac{N}{N - 1}\),
\[
\norm{u_k}_{W^{1, p}(\Omega)} \le C \norm{\rho_k * \mu}_{L^1(\Omega)} \le C \norm{\mu}_{\cM(\Omega)}.
\]
Therefore, passing to a subsequence if necessary, we may assume that
\[
u_k \weakto v 
\quad \text{and} \quad
\nabla u_k \weakto \nabla v
\quad
\text{weakly in \(L^{1}(\Omega)\)}
\]
for some function \(v \in W^{1, 1}_{0}(\Omega)\). On the other hand, we have
\[
\int_\Omega \varphi \rho_k * \mu \d{x} \to \int_\Omega \varphi \d\mu \quad \text{for every \(\varphi \in C_c^0(\Omega)\)\,;}
\]
see, e.g., \cite{Ponce:2016}*{Proposition~2.7}. By boundedness of \(V\), it thus follows that \(v\) is also a solution of \eqref{eq:dp} involving \(\mu\). Then, by uniqueness of solutions, \(v = u\) almost everywhere on \(\Omega\). We now recall that, for every \(\phi \in C^\infty(\cl\Omega)\),
\begin{equation}
\label{eq:1389}
\int_{\partial\Omega} \frac{\partial u_k}{\partial n} \phi \d\sigma = - \int_\Omega \nabla u_k \cdot \nabla \phi \d{x} - \int_\Omega V u_k \phi \d{x} + \int_\Omega \rho_k * \mu \, \phi \d{x}.
\end{equation}
Since \((\rho_k * \phi)\) is uniformly bounded and converges locally uniformly to \(\phi\) on \(\Omega\), using Fubini's theorem we get
\[
\lim_{k \to \infty}{\int_\Omega \rho_k * \mu \, \phi \d{x}} = \lim_{k \to \infty}{\int_\Omega \rho_k * \phi \d\mu} = \int_\Omega \phi \d\mu.
\]
As \(k \to \infty\) in \eqref{eq:1389}, we deduce \eqref{eq:1264} using the boundedness of \(V\).

We now prove that
\begin{equation}
\label{eq:1298}
\lim_{k \to \infty} \int_{\partial\Omega} \frac{\partial u_k}{\partial n} \psi \d\sigma = \int_{\partial\Omega} \paren[\bigg]{\int_\Omega \widehat{P_a} \d\mu} \psi(a)  \d\sigma(a) \quad \text{for every \(\psi \in C^0(\partial\Omega)\).}
\end{equation}
By boundedness of \(V\), we have \(u_k \in C^1(\cl\Omega)\) and then
\[
\frac{\partial u_k}{\partial n} = \frac{\widehat{\partial u_k}}{\partial n} \quad \text{on \(\partial\Omega\).}
\]
Fubini's theorem implies that, for every \(a \in \partial\Omega\),
\begin{equation}
\label{eq:1413}
\frac{\partial u_k}{\partial n}(a) = \int_\Omega P_a \, \rho_k * \mu \d{x} = \int_\Omega \rho_k * P_a \d\mu.
\end{equation}
Since \(V\) is bounded, \(\widehat{P_{a}}\) is continuous on \(\Omega\) and then
\[
\lim_{k \to \infty}{\rho_k * P_a(x)} = \widehat{P_{a}}(x) \quad \text{for every \(x \in \Omega\).}
\]
As this convergence is locally uniform on \(\Omega\) and \(\supp{\mu}\) is a compact subset of \(\Omega\), we get
\begin{equation}
\label{eq:1425}
\lim_{k \to \infty} \frac{\partial u_k}{\partial n}(a) = \int_\Omega \widehat{P_{a}} \d\mu \quad \text{for every \(a \in \partial\Omega\).}
\end{equation}

Let \(a \in \partial\Omega\). By comparison, \(0 \le P_a \le K_a\) almost everywhere on \(\Omega\), where \(K_a\) is the solution of \eqref{eq:Ka_dp}. Then
\[
0 \le \rho_k * P_a \le \rho_k * K_a \quad \text{on \(\Omega\).}
\]
Since \(K_a\) is harmonic on \(\Omega\), \(\supp \mu\) is a compact subset of \(\Omega\) and \(\rho_k\) is radial, we also have, for \(k\) large enough,
\[
\rho_k * K_a \le K_a \le C \quad \text{almost everywhere on \(\supp \mu\)}
\]
for some constant \(C > 0\) independent of \(a\). Then, from \eqref{eq:1413}, we get
\[
\abs[\bigg]{\frac{\partial u_k}{\partial n}(a)} \le \int_\Omega C \d\abs{\mu} = C \norm{\mu}_{\cM(\Omega)}.
\]
Thus, the sequence \((\partial u_k / \partial n)\) is uniformly bounded on \(\partial\Omega\). Using the pointwise convergence \eqref{eq:1425} and Lebesgue's dominated convergence theorem, we obtain \eqref{eq:1298}.

Combining \eqref{eq:1264} and \eqref{eq:1298}, we get
\[
\int_{\partial\Omega} \frac{\partial u}{\partial n} \psi \d\sigma = \int_{\partial\Omega} \paren[\bigg]{\int_\Omega \widehat{P_a} \d\mu} \psi(a)  \d\sigma(a) \quad \text{for every \(\psi \in C^0(\partial\Omega)\).}
\]
Therefore
\[
\frac{\partial u}{\partial n}(a) = \int_\Omega \widehat{P_a} \d\mu  \quad \text{for almost every \(a \in \partial\Omega\).}
\qedhere
\]
\end{proof}

We now proceed with the

\begin{proof}[Proof of \Cref{thm:representation_formula}]
Let \((\omega_k)\) be a nondecreasing sequence of non-empty open subsets \(\omega_k \Subset \Omega\) such that
\[
\Omega = \bigcup_{k = 1}^\infty \omega_k.
\]
Set \(\mu_k = \mu\lfloor_{\omega_k}\) and denote by \(w_k\) the solution of
\[
\begin{dcases}
\begin{aligned}
- \Delta w_k + V_{k} w_k &= \mu_k && \text{in \(\Omega\),} \\
u_k &= 0 && \text{on \(\partial\Omega\),}
\end{aligned}
\end{dcases}
\]
where \((V_{k})\) is a sequence as in \Cref{lem:measure_limit}. Observe that \(w_{k} - u\) satisfies the Dirichlet problem
\[
\begin{dcases}
\begin{aligned}
- \Delta (w_k - u) + V_{k} (w_k - u) &= (\mu_k - \mu) + (V - V_{k})u && \text{in \(\Omega\),} \\
w_k - u &= 0 && \text{on \(\partial\Omega\).}
\end{aligned}
\end{dcases}
\]
Thus, by \eqref{eq:L1_estimate_distributional_normal_derivative} and the triangle inequality, we have
\[
\norm[\bigg]{\frac{\partial w_{k}}{\partial n} - \frac{\partial u}{\partial n}}_{L^{1}(\partial\Omega)}
\le 2 \bigl(\norm{\mu_{k} - \mu}_{\cM(\Omega)} + \norm{(V - V_{k})u}_{L^{1}(\Omega)} \bigr),
\]
which implies that
\[
\frac{\partial w_{k}}{\partial n}
\to \frac{\partial u}{\partial n}
\quad \text{in \(L^{1}(\partial\Omega)\).}
\]
Therefore, taking a subsequence if necessary, we may assume that
\[
\frac{\partial w_k}{\partial n} \to \frac{\partial u}{\partial n} \quad \text{almost everywhere on \(\partial\Omega\).}
\]

To conclude, it suffices to show that
\begin{equation}
\label{eq:1488}
\lim_{k \to \infty}{\frac{\partial w_{k}}{\partial n}(a)}
= \int_{\Omega}{\widehat{P_{a}} \d\mu}
\quad \text{for almost every \(a \in \partial\Omega\).}
\end{equation}
To this end, we observe that by \Cref{lem:measure_compact} we have
\[
\frac{\partial w_{k}}{\partial n}(a)
= \int_{\Omega}{\widehat{P_{a, k}} \d\mu_{k}}
= \int_{\Omega}{\widehat{P_{a, k}} \chi_{\omega_{k}}\d\mu}.
\]
Since \((P_{a, k})\) is nonnegative and non-increasing,
\[
0 \le \widehat{P_{a, k}} \chi_{\omega_{k}} \le \widehat{P_{a, 1}} \quad \text{on \(\Omega\).}
\]
By \Cref{lem:measure_limit}, the sequence \((\widehat{P_{a, k}} \chi_{\omega_{k}})\) converges everywhere to \(\widehat{P_{a}}\) on \(\Omega\) and, by \Cref{lem:fatou_measure} applied to \(\abs{\mu}\), \(\widehat{P_{a, 1}} \in L^{1}(\Omega; \abs{\mu})\) for almost every \(a \in \partial\Omega\). For any such a point \(a\), we can then apply Lebesgue's dominated convergence theorem to deduce \eqref{eq:1488}, which completes the proof.
\end{proof}

\section{Proof of \Cref{thm:hopf_lemma}}

The following lemma ensures that the set of points \(a \in \partial\Omega\) for which \(\widehat{P_a} \equiv 0\) on \(\Omega\) is negligible with respect to the \(\cH^{N - 1}\) Hausdorff measure.

\begin{lemma}
\label{lem:nonzero_hopf_potential}
If \(V\) is a nonnegative Hopf potential, then
\[
\widehat{P_a} \not\equiv 0 \quad \text{for almost every \(a \in \partial\Omega\).}
\]
\end{lemma}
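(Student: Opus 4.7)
The plan is to apply \Cref{thm:representation_formula} directly to the Hopf function $\theta$ provided by \Cref{def:hopf_potential}, viewing $\theta$ itself as a solution of a Dirichlet problem of the form \eqref{eq:dp}.

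First I would verify that $\theta$ qualifies as a solution. By \Cref{def:hopf_potential}, $\theta \in W_0^{1, 1}(\Omega)$, $V\theta \in L^1(\Omega)$ and $\Delta\theta \in \cM(\Omega)$, hence the finite Borel measure
\[
\mu \defeq - \Delta\theta + V\theta
\]
is such that $\theta \in W_0^{1, 1}(\Omega) \cap L^1(\Omega; V\d{x})$ solves \eqref{eq:dp} with datum $\mu$. \Cref{thm:representation_formula} then yields a $\sigma$-null set $N \subset \partial\Omega$ such that, for every $a \in \partial\Omega \setminus N$, $\widehat{P_a} \in L^1(\Omega; \abs{\mu})$ and
\[
\frac{\partial \theta}{\partial n}(a) = \int_\Omega \widehat{P_a} \d\mu.
\]

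Next I would enlarge $N$ by the $\sigma$-null set on which $\partial\theta/\partial n \le 0$, which is allowed by \Cref{def:hopf_potential}. For every $a \in \partial\Omega \setminus N$ the left-hand side of the identity above is strictly positive, so the integral on the right cannot vanish. In particular, if $\widehat{P_a} \equiv 0$ on $\Omega$, the integral would be zero, a contradiction. Hence $\widehat{P_a} \not\equiv 0$ for every $a \in \partial\Omega \setminus N$, which is the desired conclusion.

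The argument is essentially a one-line reduction to \Cref{thm:representation_formula}, so there is no real obstacle; the only point that deserves attention is the admissibility of $\theta$ as a solution of \eqref{eq:dp}, which follows directly from the three conditions imposed on $\theta$ in the definition of a Hopf potential.
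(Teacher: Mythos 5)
Your proposal is correct and coincides with the paper's own argument: the paper likewise sets \(\mu = -\Delta\theta + V\theta\) for the function \(\theta\) from \Cref{def:hopf_potential} and applies \Cref{thm:representation_formula} to conclude that \(\int_\Omega \widehat{P_a}\d\mu = \partial\theta/\partial n(a) > 0\) for almost every \(a \in \partial\Omega\). Your extra care in checking that \(\theta\) is an admissible solution of \eqref{eq:dp} is a welcome but routine verification.
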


\begin{proof}
Let \(\theta\) be the function that appears in \Cref{def:hopf_potential}. By \Cref{thm:representation_formula}, for almost every \(a \in \partial\Omega\) we have
\[
\int_\Omega \widehat{P_a} (- \Delta \theta + V \theta) = \frac{\partial \theta}{\partial n}(a) > 0.\qedhere
\]
\end{proof}

We denote by \(\Gamma\) the set of points \(a \in \partial\Omega\) such that \eqref{eq:bvp_dirac} has a distributional solution. For \(a \in \Gamma\), \(P_a\) is a nontrivial nonnegative supersolution of \(- \Delta + V\), and then the strong maximum principle for the Schr\"odinger operator  implies that
\[
P_a > 0 \quad \text{almost everywhere on \(\Omega\)\,;}
\]
see \cite{Ancona:1979}*{Th\'eor\`eme~9}, \cite{BrezisPonce:2003} or \cite{Trudinger:1978}.
The following lemma is more precise:

\begin{lemma}
\label{lem:zero-set}
For every \(a \in \Gamma\), we have
\[
\widehat{P_a} > 0 \quad \text{on \(G\),}
\]
where \(G\) is the set of points \(x \in \Omega\) such that \(\widehat{\zeta_f}(x) \neq 0\) for some \(f \in L^\infty(\Omega)\).
\end{lemma}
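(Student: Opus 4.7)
The plan is to combine the integral identity satisfied by \(P_a\) (valid since \(a \in \Gamma\)) with a reciprocity argument, in order to express \(\widehat{P_a}(x_0)\) as the normal derivative at \(a\) of a Schr\"odinger--Green function \(G_V(\cdot, x_0)\), and then to invoke the hypothesis \(x_0 \in G\) together with a suitable form of the strong maximum principle to conclude positivity.

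I would first reduce to the case where \(f \geq 0\) and \(\widehat{\zeta_f}(x_0) > 0\). Given \(f \in L^\infty(\Omega)\) with \(\widehat{\zeta_f}(x_0) \neq 0\), the comparison principle yields \(\abs{\zeta_f} \leq \zeta_{\abs{f}}\) almost everywhere on \(\Omega\); passing to precise representatives then gives \(\widehat{\zeta_{\abs{f}}}(x_0) \geq \abs{\widehat{\zeta_f}(x_0)} > 0\), so we may replace \(f\) with \(\abs{f}\).

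Next, since \(a \in \Gamma\), the bilinear identity (ii) in the introduction extends from \(\zeta \in C_0^\infty(\cl\Omega)\) to test functions of the form \(\zeta = \zeta_g\) with \(g \in L^\infty(\Omega)\), via an approximation argument relying on the integrability \(V P_a \in L^1(\Omega; d_{\partial\Omega}\d{x})\) afforded by \(a \in \Gamma\) and on the \(C^1(\cl\Omega)\) regularity of the truncated solutions \(\zeta_{g, k}\) from \Cref{lem:measure_limit}. Applied with \(g = \chi_{B_r(x_0)}/\abs{B_r(x_0)}\), the extended identity reads
\[
\frac{\partial \zeta_g}{\partial n}(a) = \frac{1}{\abs{B_r(x_0)}} \int_{B_r(x_0)} P_a \d{x},
\]
whose right-hand side tends to \(\widehat{P_a}(x_0)\) as \(r \to 0\), since every point of \(\Omega\) is a Lebesgue point of \(P_a\).

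The remaining step, and the main obstacle, is to combine this limit with the reciprocity \(\int_\Omega f \zeta_g \d{x} = \int_\Omega g \zeta_f \d{x}\), valid for the self-adjoint operator \(-\Delta + V\) with zero Dirichlet condition. That reciprocity implies that the \(L^1\) limit \(G_V(\cdot, x_0) \defeq \lim_{r \to 0} \zeta_g\) is nontrivial, because \(\int_\Omega G_V(\cdot, x_0) f \d{x} = \widehat{\zeta_f}(x_0) > 0\). One then identifies \(\widehat{P_a}(x_0)\) with \(\partial G_V(\cdot, x_0)/\partial n(a)\) and concludes its positivity via the strong maximum principle for the Schr\"odinger operator applied to the nontrivial nonnegative distributional solution \(G_V(\cdot, x_0)\). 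The subtlety is that we need positivity of a normal derivative at the specific boundary point \(a \in \Gamma\) rather than almost everywhere — this must be obtained without invoking \Cref{thm:hopf_lemma} itself so as to avoid circularity, and it is precisely the bilinear identity for \(a \in \Gamma\) that allows this pointwise conclusion.
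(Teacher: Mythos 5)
There is a genuine gap in the final step, and it is essentially a circularity. The chain of identities you set up is correct as far as it goes: the duality definition of \(P_a\) applied to \(g_r = \chi_{B_r(x_0)}/\abs{B_r(x_0)}\) gives \(\widehat{\partial \zeta_{g_r}}/\partial n(a) = \frac{1}{\abs{B_r(x_0)}}\int_{B_r(x_0)} P_a \d{x} \to \widehat{P_a}(x_0)\) (this holds for every \(a \in \partial\Omega\), not just \(a \in \Gamma\)), and reciprocity identifies this limit with \(\partial G_V(\cdot, x_0)/\partial n(a)\) for a Green function with pole at \(x_0\). But at that point you have only rewritten \(\widehat{P_a}(x_0)\) in equivalent forms; you have produced no positive lower bound for it. The strong maximum principle for \(-\Delta + V\) yields \emph{interior} positivity of the nontrivial nonnegative supersolution \(G_V(\cdot, x_0)\); it says nothing about the normal derivative at a prescribed boundary point \(a\). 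Positivity of \(\partial G_V(\cdot, x_0)/\partial n\) at a \emph{specific} point is exactly a pointwise Hopf-type statement — which is what the lemma amounts to after unwinding the duality, and which the paper only ever obtains almost everywhere on \(\partial\Omega\). Your closing assertion that ``the bilinear identity for \(a \in \Gamma\) allows this pointwise conclusion'' does not close the gap: that identity, \(\partial\zeta/\partial n(a) = \int_\Omega P_a(-\Delta\zeta + V\zeta)\d{x}\), merely converts the normal derivative at \(a\) back into an integral against \(P_a\), which is where you started.

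The missing idea is a quantitative lower bound on \(P_a\) itself by a Dirichlet solution with a nontrivial \emph{interior} datum. This is what \Cref{lem:comparison_principle} supplies: since \(a \in \Gamma\), \(P_a\) is a distributional solution of \eqref{eq:bvp} with datum \(\delta_a\), hence \(P_a \ge \zeta_{H(P_a)}\) almost everywhere on \(\Omega\), where \(H\) is bounded, nondecreasing and positive on \((0, +\infty)\). Because \(P_a \not\equiv 0\) (by the interior strong maximum principle, which is the legitimate use of that tool here), the datum \(H(P_a)\) is a nonnegative nontrivial function in \(L^\infty(\Omega)\), and the description of the universal zero set in Example~1.6 of \cite{OrsinaPonce:2020} gives \(\widehat{\zeta_{H(P_a)}} > 0\) exactly on \(G\). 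Passing to precise representatives in the inequality then yields \(\widehat{P_a} \ge \widehat{\zeta_{H(P_a)}} > 0\) on \(G\). Without a comparison of this kind, the positivity on \(G\) cannot be extracted from duality identities alone.
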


The proof of \Cref{lem:zero-set} relies upon a comparison principle for distributional solutions of the boundary value problem
\begin{equation}
\label{eq:bvp}
\begin{dcases}
\begin{aligned}
- \Delta v + V v &= 0 && \text{in \(\Omega\),} \\
v &= \nu && \text{on \(\partial\Omega\),}
\end{aligned}
\end{dcases}
\end{equation}
where \(\nu\) is a nonnegative finite Borel measure on \(\partial\Omega\), with some suitable solution of \eqref{eq:dp} with bounded datum. By a distributional solution \(v\) of \eqref{eq:bvp}, we mean that \(v \in L^1(\Omega)\) is such that \(V v \in L^1(\Omega; d_{\partial\Omega} \d{x})\) and
\[
\int_\Omega v (- \Delta \zeta + V \zeta) \d{x} = \int_{\partial\Omega} \frac{\partial \zeta}{\partial n} \d\nu \quad \text{for every \(\zeta \in C_0^\infty(\cl\Omega)\).}
\]

\begin{lemma}
\label{lem:comparison_principle}
There exists a bounded nondecreasing continuous function \(H : \R_+ \to \R_+\), with \(H(t) > 0\) for \(t > 0\), such that, for every nonnegative function \(V \in L_\loc^1(\Omega)\) and every nonnegative finite Borel measure \(\nu\) on \(\partial\Omega\), if \(v\) is the distributional solution of \eqref{eq:bvp} with datum \(\nu\), then \(v \ge \zeta_{H(v)}\) almost everywhere on \(\Omega\).
\end{lemma}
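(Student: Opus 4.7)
The strategy is to construct the function $H$ universally by approximation of $V$ and $\nu$, establish the comparison at the smooth level, and pass to the limit.

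I would first reduce to smooth data. Let $(V_{k})$ be a nondecreasing sequence of bounded nonnegative functions in $L^{\infty}(\Omega)$ converging almost everywhere to $V$, as in \Cref{lem:measure_limit}, and let $(\nu_{j})$ be a sequence of smooth nonnegative functions on $\partial\Omega$ converging weakly-$*$ to $\nu$. Denote by $v_{k, j} \in C^{\infty}(\cl{\Omega})$ the classical solution of
\[
-\Delta v_{k, j} + V_{k} v_{k, j} = 0 \quad \text{in \(\Omega\),} \qquad v_{k, j} = \nu_{j} \quad \text{on \(\partial\Omega\).}
\]
The comparison principle from \cite{PonceWilmet:2020} together with the weak maximum principle ensure that, along a diagonal subsequence, $v_{k, j} \to v$ in $L^{1}(\Omega)$.

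I would next establish the pointwise comparison $v_{k, j} \ge \zeta_{H(v_{k, j}), k}$ at the smooth level, where $\zeta_{f, k}$ denotes the solution of $-\Delta \zeta + V_{k} \zeta = f$ with zero boundary datum. Natural candidates for $H$ are $H(t) \defeq \alpha (t \wedge 1)$ or, more robustly, $H(t) \defeq \alpha (t \wedge 1)^{p}$ with constants $\alpha > 0$ and $p \ge 1$ depending only on $\Omega$. The core of the argument is to apply Kato's inequality to $w \defeq (\zeta_{H(v_{k, j}), k} - v_{k, j})_{+} \in W^{1, 2}_{0}(\Omega)$, yielding the distributional inequality
\[
-\Delta w + V_{k} w \le \chi_{\{w > 0\}} H(v_{k, j}) \quad \text{in \(\Omega\).}
\]
Testing against $w$ and exploiting $H(v_{k, j}) \le v_{k, j} < \zeta_{H(v_{k, j}), k}$ on $\{w > 0\}$, combined with the classical Hopf boundary point lemma for the smooth solution $v_{k, j}$ (which furnishes a linear lower bound $v_{k, j}(x) \ge c_{k, j} d_{\partial\Omega}(x)$ near the support of $\nu_{j}$), one forces $w \equiv 0$, and therefore $v_{k, j} \ge \zeta_{H(v_{k, j}), k}$.

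Passing to the limit is routine: since $H$ is bounded and continuous and $v_{k, j} \to v$ in $L^{1}(\Omega)$, Lebesgue's dominated convergence theorem gives $H(v_{k, j}) \to H(v)$ in $L^{1}(\Omega)$, and the standard $L^{1}$-stability for the Dirichlet problem associated with $-\Delta + V$ (cf.~\cite{PonceWilmet:2020}) then yields $\zeta_{H(v_{k, j}), k} \to \zeta_{H(v)}$ in $L^{1}(\Omega)$. Extracting a further almost everywhere convergent subsequence transports the inequality to the limit, proving $v \ge \zeta_{H(v)}$ a.e.~in $\Omega$.

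The main obstacle is the smooth-level comparison, and specifically the calibration of $H$. A bare Kato energy estimate only produces a bound of the form $\|w\|_{W^{1, 2}_{0}}^{2} \le C \alpha \|w\|_{L^{2}}$, which controls $w$ in $L^{2}$ by $\alpha$ but does not directly vanish it. The delicate point is to choose $p$ and $\alpha$ so that, uniformly over all potentials $V$ and all boundary data $\nu$, the forcing $H(v_{k, j}) = O(v_{k, j}^{p})$ generates a solution $\zeta_{H(v_{k, j}), k}$ that is pointwise dominated by the linear Hopf profile of $v_{k, j}$ in a boundary layer and by the bulk profile of $v_{k, j}$ away from it; this matching of boundary and interior scales is what drives the existence of a universal $H$.
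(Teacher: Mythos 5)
Your overall architecture---approximate $V$ by truncations and $\nu$ by smooth densities, prove the comparison at the regular level, then pass to the limit using dominated convergence for $H$ and $L^{1}$-stability of $f \mapsto \zeta_{f}$---is exactly the architecture of the paper's proof (the paper runs the two limits sequentially, first in $\nu$ with $V$ bounded and then in $V$, rather than diagonally, but that difference is cosmetic; one only needs the monotonicity of $(w_k)$ from \cite{PonceWilmet:2020} to justify the second limit).

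The genuine gap is in what you call the ``main obstacle,'' which is in fact the entire content of the lemma: the existence of a single bounded nondecreasing $H$ with $H>0$ on $(0,+\infty)$, independent of $V$ and of the boundary datum, such that $v \ge \zeta_{H(v)}$ at the regular level. Your Kato-inequality argument does not close, and you concede as much: testing $-\Delta w + V_k w \le \chi_{\{w>0\}} H(v_{k,j})$ against $w=(\zeta_{H(v_{k,j}),k}-v_{k,j})_{+}$ gives an energy bound of the form $\norm{\nabla w}_{L^2}^2 \le \int_{\{w>0\}} H(v_{k,j})\, w$, and since $H(v_{k,j}) \le v_{k,j} < \zeta_{H(v_{k,j}),k}$ on $\{w>0\}$ the right-hand side is controlled by quantities involving $\zeta_{H(v_{k,j}),k}$ itself---the estimate is circular and does not force $w\equiv 0$. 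The invocation of the classical Hopf lemma for $v_{k,j}$ with a constant $c_{k,j}$ depending on $k,j$ is also at odds with the required uniformity of $H$: the whole difficulty is that $H$ must not degenerate as $V_k \uparrow V$ or as $\nu_j$ concentrates. The paper does not attempt to reprove this step; it imports it verbatim as Proposition~4.1 of \cite{OrsinaPonce:2018} (stated there for bounded $V$ and $\nu=g\,\d\sigma$ with $g\in L^\infty$), explicitly ``taking this result for granted,'' and the actual construction of $H$ there is a nontrivial boundary-layer/iteration argument, not a one-shot Kato estimate. As written, your proposal reduces the lemma to an unproved claim that is essentially equivalent to the cited proposition, so the proof is incomplete at its crucial point. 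Either cite \cite{OrsinaPonce:2018}*{Proposition~4.1} for the regular case, as the paper does, or supply the full construction of $H$.
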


\Cref{lem:comparison_principle} extends Proposition~4.1 in \cite{OrsinaPonce:2018} which deals with the case where \(\nu = g \d\sigma\) with nonnegative \(g \in L^\infty(\partial\Omega)\). We take this result for granted, so that, in the proof of \Cref{lem:comparison_principle}, the function \(H\) is the one constructed in \cite{OrsinaPonce:2018}. In our approximation argument, we rely on the fact that \(H\) does not depend on the potential \(V\) nor on the datum \(\nu\). An admissible choice of \(H\) is
\[
H(t) =
\begin{cases}
	\epsilon t^{\alpha}
	& \text{for \(0 \le t \le 1\),}\\
	\epsilon
	& \text{for \(t > 1\),}	
\end{cases}
\]
where \(0 < \alpha < 1\) and \(\epsilon > 0\) is small and depends on \(\alpha\).

\begin{proof}[Proof of \Cref{lem:comparison_principle}]
We first assume that \(V\) is bounded. In this case, the boundary value problem \eqref{eq:bvp} has a unique solution for every datum \(\nu = g \d\sigma\) with \(g \in L^\infty(\partial\Omega)\). Let \((g_k)\) be a sequence of nonnegative smooth functions on \(\partial\Omega\) such that
\[
\int_{\partial\Omega} \psi g_k \d\sigma \to \int_{\partial\Omega} \psi \d\nu \quad \text{for every \(\psi \in C^0(\partial\Omega)\).}
\]
Such a sequence can be obtained, for instance, from a convolution of \(\nu\) with a sequence of mollifiers. We denote by \(v_k\) the solution of \eqref{eq:bvp} with datum \(g_k\). According to Proposition~4.1 in \cite{OrsinaPonce:2018}, there exists a function \(H : \R_{+} \to \R_{+}\) as in the statement above such that
\begin{equation}
\label{eqNew1}
v_k \ge \zeta_{H(v_k)} \quad \text{almost everywhere on \(\Omega\).}
\end{equation}
By standard estimates, we have
\[
\norm{v_k}_{W^{1, 1}(\omega)} \le C \norm{g_k}_{L^1(\partial\Omega)}
\]
for every \(\omega \Subset \Omega\) and some constant \(C > 0\) depending on \(\omega\)\,; see \cite{MarcusVeron:2014}*{Theorem~1.2.2 and Proposition~2.1.2}. Hence, we may assume that \((v_k)\) converges almost everywhere on \(\Omega\) to some function \(w\). Then, Lebesgue's dominated convergence theorem implies that
\[
\int_\Omega w (- \Delta \zeta + V \zeta) \d{x} = \int_{\partial\Omega} \frac{\partial \zeta}{\partial n} \d\nu \quad \text{for every \(\zeta \in C_0^\infty(\cl\Omega)\).}
\]
By uniqueness of distributional solutions, we have \(w = v\). Since \(H(v_k) \to H(v)\) almost everywhere on \(\Omega\), we deduce from Lebesgue's dominated convergence theorem that \(H(v_k) \to H(v)\) in \(L^1(\Omega)\). It then follows from \eqref{eq:W1p_estimate} that, taking a subsequence if necessary,
\[
\zeta_{H(v_k)} \to \zeta_{H(v)} \quad \text{almost everywhere on \(\Omega\).}
\]
Taking the limit in \eqref{eqNew1} as \(k \to \infty\), we obtain the conclusion when \(V\) is bounded.

In the general case where \(V\) is merely a nonnegative function in \(L^{1}_{\loc}(\Omega)\), we denote by \(w_k\) the solution of
\[
\begin{dcases}
\begin{aligned}
- \Delta w_k + V_k w_k &= 0 && \text{in \(\Omega\),} \\
w_k &= \nu && \text{on \(\partial\Omega\),}
\end{aligned}
\end{dcases}
\]
where \((V_k)\) is a sequence as in \Cref{lem:measure_limit}. By the first part of the proof, we have
\begin{equation}
\label{eqNew2}
w_k \ge \zeta_{H(w_k)} \quad \text{almost everywhere on \(\Omega\).}
\end{equation}
One then shows that \((w_k)\) is a non-increasing sequence of nonnegative functions such that \(w_k \to v\) almost everywhere on \(\Omega\) and in \(L^1(\Omega)\)\,; see~\cite{PonceWilmet:2020}*{Propositions~3.2~and~4.2}. One then deduces as above that \(H(w_k) \to H(v)\) in \(L^1(\Omega)\). Letting \(k \to \infty\) in \eqref{eqNew2}, the conclusion follows.
\end{proof}

\begin{proof}[Proof of \Cref{lem:zero-set}]
Let \(a \in \Gamma\). By \Cref{lem:comparison_principle}, we have
\[
P_a \ge \zeta_{H(P_a)} \quad \text{almost everywhere on \(\Omega\)\,;}
\]
whence
\[
\widehat{P_a} \ge \widehat{\zeta_{H(P_a)}} \quad \text{on \(\Omega\).}
\]
Since \(P_a \not\equiv 0\) and \(H > 0\) on \((0, +\infty)\), we have \(H(P_{a}) \ge 0\) and \(H(P_{a}) \not\equiv 0\). By Example~1.6 in \cite{OrsinaPonce:2020} we deduce that
\[
\widehat{P_a} \ge \widehat{\zeta_{H(P_a)}} > 0 \quad \text{on \(G\).}\qedhere
\]
\end{proof}

We now turn to the

\begin{proof}[Proof of \Cref{thm:hopf_lemma}]
\Cref{lem:nonzero_hopf_potential,lem:zero-set} imply that, for almost every \(a \in \Omega\),
\[
\widehat{P_a} > 0 \quad \text{on \(G\).}
\]
On the other hand, since \(\mu\) is nonnegative, Theorem~1.4 in \cite{OrsinaPonce:2020} ensures that \eqref{eq:dp} with datum \(\mu\) has a solution if and only if \(\mu(\Omega \setminus G) = 0\). Since \(u\) is a nontrivial solution with datum \(\mu\), we thus have that \(\mu \neq 0\) on \(G\). Therefore,
\[
\int_\Omega \widehat{P_a} \d\mu > 0 \quad \text{for almost every \(a \in \partial\Omega\).}
\]
The conclusion then follows from \Cref{thm:representation_formula}.
\end{proof}

\section*{Acknowledgements}

The first author (A.C.P.) was supported by the Fonds de la Recherche scientifique (F.R.S.--FNRS) under research grant  J.0020.18.

\begin{bibdiv}
\begin{biblist}

\bib{Ancona:1979}{article}{
   author={Ancona, Alano},
   title={Une propri\'et\'e d'invariance des ensembles absorbants par
   perturbation d'un op\'erateur elliptique},
   journal={Comm. Partial Differential Equations},
   volume={4},
   date={1979},
   number={4},
   pages={321--337},
}

\bib{BrezisMarcusPonce:2007}{article}{
   author={Brezis, Ha\"im},
   author={Marcus, Moshe},
   author={Ponce, Augusto C.},
   title={Nonlinear elliptic equations with measures revisited},
   conference={
      title={Mathematical aspects of nonlinear dispersive equations},
   },
   book={
      series={Ann. of Math. Stud.},
      volume={163},
      publisher={Princeton Univ. Press, Princeton, NJ},
   },
   date={2007},
   pages={55--109},
}

\bib{BrezisPonce:2003}{article}{
   author={Brezis, Ha\"im},
   author={Ponce, Augusto C.},
   title={Remarks on the strong maximum principle},
   journal={Differential Integral Equations},
   volume={16},
   date={2003},
   number={1},
   pages={1--12},
}

\bib{BrezisPonce:2008}{article}{
   author={Brezis, Ha\"im},
   author={Ponce, Augusto C.},
   title={Kato's inequality up to the boundary},
   journal={Commun. Contemp. Math.},
   volume={10},
   date={2008},
   number={6},
   pages={1217--1241},
}

\bib{DalMasoMosco:1986}{article}{
   author={Dal Maso, Gianni},
   author={Mosco, Umberto},
   title={Wiener criteria and energy decay for relaxed Dirichlet problems},
   journal={Arch. Rational Mech. Anal.},
   volume={95},
   date={1986},
   number={4},
   pages={345--387},
}

\bib{LittmanStampacchiaWeinberger:1963}{article}{
   author={Littman, Walter},
   author={Stampacchia, Guido},
   author={Weinberger, Hans F.},
   title={Regular points for elliptic equations with discontinuous
   coefficients},
   journal={Ann. Scuola Norm. Sup. Pisa (3)},
   volume={17},
   date={1963},
   pages={43--77},
}

\bib{MalusaOrsina:1996}{article}{
   author={Malusa, Annalisa},
   author={Orsina, Luigi},
   title={Existence and regularity results for relaxed Dirichlet problems with measure data},
   journal={Ann. Mat. Pura Appl. (4)},
   volume={170},
   date={1996},
   pages={57--87},
}

\bib{MarcusVeron:2014}{book}{
   author={Marcus, Moshe},
   author={V\'{e}ron, Laurent},
   title={Nonlinear second order elliptic equations involving measures},
   series={De Gruyter Series in Nonlinear Analysis and Applications},
   volume={21},
   publisher={De Gruyter, Berlin},
   date={2014},
}

\bib{OrsinaPonce:2018}{article}{
   author={Orsina, Luigi},
   author={Ponce, Augusto C.},
   title={Hopf potentials for the Schr\"{o}dinger operator},
   journal={Anal. PDE},
   volume={11},
   date={2018},
   number={8},
   pages={2015--2047},
}

\bib{OrsinaPonce:2020}{article}{
   author={Orsina, Luigi},
   author={Ponce, Augusto C.},
   title={On the nonexistence of Green's function and failure of the strong maximum principle},
   journal={J. Math. Pures Appl. (9)},
   volume={134},
   date={2020},
   pages={72--121},
}

\bib{Ponce:2016}{book}{
   author={Ponce, Augusto C.},
   title={Elliptic PDEs, measures and capacities. From the Poisson equations to nonlinear Thomas-Fermi problems},
   series={EMS Tracts in Mathematics},
   volume={23},
   publisher={European Mathematical Society (EMS), Z\"{u}rich},
   date={2016},
}

\bib{PonceWilmet:2020}{article}{
   author={Ponce, Augusto C.},
   author={Wilmet, Nicolas},
   title={The Hopf lemma for the Schr\"{o}dinger operator},
   journal={Adv. Nonlinear Stud.},
   volume={20},
   date={2020},
   number={2},
   pages={459--475},
}

\bib{Trudinger:1978}{article}{
   author={Trudinger, Neil S.},
   title={On the positivity of weak supersolutions of nonuniformly elliptic equations},
   journal={Bull. Austral. Math. Soc.},
   volume={19},
   date={1978},
   number={3},
   pages={321--324},
}

\end{biblist}
\end{bibdiv}

\end{document}